\documentclass[11pt]{article}
\usepackage[margin=1in]{geometry}
\usepackage{amsmath,amssymb,amsthm,mathtools}
\usepackage{microtype}
\usepackage{parskip}
\usepackage{enumitem}
\usepackage[colorlinks=true,linkcolor=blue,citecolor=blue,urlcolor=blue]{hyperref}

\theoremstyle{plain}
\newtheorem{theorem}{Theorem}
\newtheorem{lemma}[theorem]{Lemma}
\newtheorem{proposition}[theorem]{Proposition}
\newtheorem{corollary}[theorem]{Corollary}
\theoremstyle{definition}
\newtheorem{definition}[theorem]{Definition}
\newtheorem{example}[theorem]{Example}
\newtheorem{remark}[theorem]{Remark}

\newcommand{\sgn}{\operatorname{sgn}}
\newcommand{\Sym}{\operatorname{Sym}}
\newcommand{\Fix}{\operatorname{Fix}}
\newcommand{\SB}{\mathit{SB}}

\title{$\SB(3,n)$ has no Hamiltonian cycle when $n$ is even%
\thanks{Resolution of Exercise~7.2.2.4--224 in D.~E. Knuth, \emph{The Art of Computer
Programming}, Volume~4, Pre-Fascicle~8a (draft of 10 April 2026)~\cite{KnuthF8a}.
Knuth ranks problem difficulty on a 0--50 scale; [46] denotes a research-level problem.}\\
{\normalsize A sign-of-permutation proof, with extension to all odd $m\equiv 3\pmod 4$}}
\author{Shisheng Li\\
  \small University of Science and Technology of China\\[-2pt]
  \small \texttt{shisheng@mail.ustc.edu.cn}}
\date{\today}

\begin{document}
\maketitle

\begin{abstract}
We resolve exercise 7.2.2.4--224 of Knuth's Pre-Fascicle~8a (10 April~2026 draft, rated~[46]):
the digraph $\SB(3,n)$ has no Hamiltonian cycle when $n$ is even.
The argument is a sign-of-permutation obstruction. Writing the successor map of a
candidate Hamiltonian cycle as $f_S=A_b\circ\sigma$ (Definition~\ref{def:Asigma}),
$\sgn(A_b)=+1$ when $m$ is odd, so $\sgn(f_S)=\sgn(\sigma)$ for every choice set~$S$.
A short dihedral Burnside computation shows $\sgn(\sigma)=-1$ on $\Sigma_3^n$ for
even~$n$, contradicting the sign $+1$ required of a single $3^n$-cycle.
The same argument gives the stronger statement that $\SB(m,n)$ has no Hamiltonian cycle
whenever $m$ is odd with $m\equiv 3\pmod 4$ and $n$ is even; this restricts
the residue classes in which Knuth's hint to Ex.~225 (existence of
Hamiltonian cycles in $\SB(m,n)$ for all $m>3$ and $n>2$) can hold.
\end{abstract}

\noindent\textbf{MSC2020:} 05C45 (Eulerian and Hamiltonian graphs);
05A05 (Permutations, words, matrices); 20B30 (Symmetric groups);
94A55 (Shift register sequences and sequences over finite alphabets).

\noindent\textbf{Keywords:} Hamiltonian cycle, de Bruijn cycle, shift register,
necklace, Burnside lemma, sign of a permutation, TAOCP exercise.

% ============================================================================
\section{Knuth's exercise: definitions and current status}
% ============================================================================

\subsection{The digraph $\SB(m,n)$ (Ex.~223)}

\begin{definition}[Knuth, Ex.~7.2.2.4--223]
Let $m,n\ge 1$ be integers, and let $\Sigma=\Sigma_m=\{0,1,\dots,m-1\}$. The
\emph{shift-and-save-or-bump} digraph $\SB(m,n)$ has vertex set $\Sigma^n$ and arcs
\begin{align*}
   \text{(save)}\quad& x_1x_2\dots x_n \;\longrightarrow\; x_2\dots x_n\,x_1,\\
   \text{(bump)}\quad& x_1x_2\dots x_n \;\longrightarrow\; x_2\dots x_n\,x_1^{+},\qquad
   x_1^+ := (x_1+1)\bmod m.
\end{align*}
There is a self-loop at $x$ iff $x_1=x_2=\dots=x_n$ (then the save arc returns to $x$;
the bump does not).
\end{definition}

Every vertex has in-degree $2$ and out-degree~$2$ (Ex.~223a). In particular,
$\SB(m,n)$ is a $2$-regular subdigraph of the full $m$-ary de~Bruijn digraph,
which is itself $m$-regular and is well known to be Hamiltonian for all
$m,n\ge 1$ (Good~\cite{Good1946} and de~Bruijn~\cite{deBruijn1946} for $m=2$;
the $m$-ary case is folklore, traceable to van~Aardenne-Ehrenfest and
de~Bruijn~\cite{vAEdB1951}). The structural surprise of this paper
is that, although the larger de~Bruijn digraph is Hamiltonian
in abundance, the restriction to the $\{$save,~bump$\}$ pair of out-arcs is
sharp enough to forbid Hamiltonicity for $m\equiv 3\pmod 4$ and even~$n$.
Knuth's pre-fascicle records the following facts and questions about $\SB(m,n)$.

\begin{itemize}[leftmargin=*]
\item \textbf{Ex.~223(b).} Hamiltonian cycles of $\SB(m,n)$ correspond naturally to
  $m$-ary de Bruijn cycles of length $m^n$; the correspondence is one-to-one for $m=2$
  and many-to-one for $m\ge 3$.
\item \textbf{Ex.~223(c).} $\SB(m,2)$ has no Hamiltonian cycle for $m>2$.
\item \textbf{Ex.~223(d).} $\SB(3,3)$ has the Hamiltonian cycle
  \[
     \texttt{(000100201202210211011121222)},
  \]
  meaning $000\to 001\to 010\to\dots\to 222\to 220\to 200\to 000$.
\item \textbf{Ex.~223(e) --- \emph{the choice-set reformulation}.} If $C$ is a Hamiltonian cycle
  of $\SB(m,n)$, then there is a unique $S\subseteq\Sigma^{n-1}$ such that
  $x_1\dots x_n\to x_2\dots x_n x_1\in C$ iff $x_2\dots x_n\in S$. (Proof, writing
  $x_1^-:=(x_1-1)\bmod m$: if the save arc out of $x_1y$ is in $C$, then it has the
  same target as the bump arc out of $x_1^-\,y$; that bump arc is therefore not in
  $C$, so the save arc out of $x_1^-\,y$ must be. Iterating shows that whether to
  save depends only on the suffix $y$.)
\item \textbf{Ex.~223(f).} The numbers of Hamiltonian cycles of $\SB(m,3)$ for
  $2\le m\le 7$ are
  \[
     2,\;12,\;88,\;7510,\;675714,\;459086712.
  \]
\end{itemize}

\subsection{Ex.~224 and Knuth's progress}

The research-level problem we solve is:

\begin{quote}
\textbf{Ex.~224 [46].} \emph{Prove or disprove: $\SB(3,n)$ has no Hamiltonian cycles when $n$ is even.}
\end{quote}

The corresponding answer in the pre-fascicle reads, in full:

\begin{quote}
\emph{Algorithm~B proves this when $n=4$. But its search tree even in that small case has
3~million nodes; there's no evident way to rule out tons of feasible near-solutions.}
\end{quote}

So the case $n=4$ is the only one that has been verified, via Knuth's
Algorithm~B, whose search tree at $n=4$ already has $\sim 3$ million nodes.
Knuth records no structural argument and explicitly notes the absence of one. The cases $n=6$ (729 vertices, $2^{240}\sim 10^{72}$ valid choice sets after the no-self-loop pruning) and beyond
are open in the literature.

To the best of the author's knowledge, no preprint or publication on the
question beyond Knuth's pre-fascicle has appeared at the time of submission,
although the author is aware of independent unpublished investigations and
has corresponded with the relevant parties.

The companion problem Ex.~225 asks for a \emph{construction} of a Hamiltonian cycle in
$\SB(m,3)$ for every $m>1$, with the answer adding the broader hint
``\emph{More generally, construct one in $\SB(m,n)$ for all $m>3$ and $n>2$.}''
For the constructive side, $k$-ary de Bruijn shift rules in the spirit of
Fredricksen--Maiorana~\cite{FredricksenMaiorana}, Alhakim~\cite{Alhakim},
Sawada--Williams--Wong~\cite{SawadaWilliamsWong2017}, and the successor-rule
framework of Gabric--Sawada--Williams--Wong~\cite{GabricSawadaWilliamsWong}
provide existence in the de Bruijn graph; what distinguishes the
$\SB(m,n)$ Hamiltonicity question is the constraint that the feedback
function takes only the values $\{0,1\}$ rather than the full
$\Sigma_m$.

% ============================================================================
\section{Reformulation as an FSR sign problem}
% ============================================================================

By Ex.~223(e), the search for a Hamiltonian cycle of $\SB(m,n)$ reduces to choosing
a suitable subset $S\subseteq\Sigma^{n-1}$. We package this into a function
\[
   b\colon\Sigma^{n-1}\to\{0,1\},\qquad b(y) = [y\notin S],
\]
and the resulting \emph{successor map}
\begin{equation}\label{eq:fS}
   f_S\colon \Sigma^n\to\Sigma^n, \qquad
   (x_1,\dots,x_n) \;\longmapsto\; \bigl(x_2,\dots,x_n,\;x_1+b(x_2,\dots,x_n)\bmod m\bigr).
\end{equation}

The map $f_S$ is invertible: given $(y_1,\dots,y_n)$, the unique pre-image is
$(y_n - b(y_1,\dots,y_{n-1}),\,y_1,\dots,y_{n-1})$. Hence $f_S$ is a permutation of
$\Sigma^n$, i.e.\ an element of the symmetric group $\Sym(\Sigma^n)$ --- by which
we always mean the group of all bijections $\Sigma^n\to\Sigma^n$ under composition.
The Hamiltonian-cycle problem is now compact:

\begin{proposition}\label{prop:reform}
$\SB(m,n)$ has a Hamiltonian cycle if and only if there exists $b\colon\Sigma^{n-1}\to\{0,1\}$
such that $f_S$ is a single $m^n$-cycle in $\Sym(\Sigma^n)$.
\end{proposition}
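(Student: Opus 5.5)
The plan is to prove the two directions separately. Both rest on one identification: the arcs of $\SB(m,n)$ leaving $x=x_1\dots x_n$ are exactly $x\to f_S(x)$ for the two possible values $b(x_2\dots x_n)\in\{0,1\}$. The value $0$ gives the save arc and the value $1$ gives the bump arc. So a choice function $b$ selects exactly one out-arc at each vertex, and the graph of $f_S$ is the spanning subdigraph made of the selected arcs.

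\emph{($\Leftarrow$)} Suppose $b$ is given and $f_S$ is a single $m^n$-cycle in $\Sym(\Sigma^n)$. Then the sequence $x, f_S(x), f_S^2(x),\dots,f_S^{m^n-1}(x)$ lists every vertex of $\Sigma^n$ exactly once, and $f_S^{m^n}(x)=x$. Each step $f_S^{k}(x)\to f_S^{k+1}(x)$ is a save or bump arc of $\SB(m,n)$ by the identification above. Hence this sequence is a Hamiltonian cycle. No self-loop can intervene, because a single cycle of length $m^n\ge 2$ has no fixed points. (The degenerate case $m^n=1$ can be set aside or checked by hand.)

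\emph{($\Rightarrow$)} Let $C$ be a Hamiltonian cycle. Define $g\colon\Sigma^n\to\Sigma^n$ by sending each vertex to its successor on $C$; then $g$ is a single $m^n$-cycle. By Ex.~223(e), there is a unique $S\subseteq\Sigma^{n-1}$ such that the arc of $C$ leaving $x_1\dots x_n$ is the save arc iff $x_2\dots x_n\in S$. Every arc of $C$ is either save or bump, so when $x_2\dots x_n\notin S$ the arc of $C$ leaving $x$ is the bump arc. Set $b(y)=[y\notin S]$. Then $g(x)=(x_2,\dots,x_n,x_1+b(x_2\dots x_n))=f_S(x)$ for every $x$, so $f_S=g$ is a single $m^n$-cycle.

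One subtlety needs care. When $m=1$ or $m=2$, the save and bump arcs out of $x$ might coincide as arcs or be described ambiguously. For $m\ge2$ we have $x_1^+\ne x_1$, so the two out-arcs are distinct and the classification of an arc of $C$ as save or bump is unambiguous. For $m=1$ the statement is trivial. The main obstacle, which is mild, is justifying that Ex.~223(e)'s choice set really determines the arc of $C$ at \emph{every} vertex, including the non-saved ones. I would settle this by the dichotomy just noted: each vertex has exactly two out-arcs, save and bump, and $C$ uses exactly one of them.
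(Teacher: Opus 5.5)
Your proof is correct and follows the paper's route. The paper treats the proposition as immediate from Ex.~223(e), which says a Hamiltonian cycle's save/bump choice depends only on the suffix $x_2\dots x_n$, together with the fact that $f_S$ selects exactly one of the two distinct out-arcs at each vertex; you have simply written out both directions explicitly, including the point that the bump arcs are forced at the non-saved suffixes, so the successor map of $C$ equals $f_S$ with $b(y)=[y\notin S]$.
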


This is the standard \emph{nonsingular feedback-shift register} viewpoint
(Golomb~\cite{Golomb}; see also Lempel~\cite{Lempel}, Etzion--Lempel~\cite{EtzionLempel},
and Fredricksen's survey~\cite{FredricksenSurvey}): if $(s_k)_{k\ge 0}$ is the
periodic sequence of first coordinates of the cycle vertices --- so that
$f_S^{\,k}(v_0)=(s_k,s_{k+1},\dots,s_{k+n-1})$ for every $k$ --- then the
recurrence generating $(s_k)$ is
\[
   s_{k+n} = s_k + b(s_{k+1},\dots,s_{k+n-1})\bmod m.
\]
The constraint that $b$ takes values in $\{0,1\}$ (rather than the full
$\Sigma_m$) is what distinguishes $\SB(m,n)$ Hamiltonian cycles from arbitrary
$m$-ary de Bruijn cycles; in FSR language, $b$ is a \emph{binary-valued} feedback
function on an $m$-ary register. Such sign-of-permutation arguments for
binary FSR sequences go back to Mykkeltveit~\cite{Mykkeltveit} (Golomb's
conjecture on the parity of the number of binary de~Bruijn sequences); for
context in the $k$-ary case see Hauge--Mykkeltveit~\cite{HaugeMykkeltveit}
and Etzion's monograph~\cite{EtzionBook}, neither of which addresses
$\SB(m,n)$. The argument below pivots on a single observation:
for odd~$m$, $A_b$ is automatically even on each fibre, so
$\sgn(f_S)=\sgn(\sigma)$ \emph{regardless of $S$}; the rest is a
dihedral-Burnside parity computation for $N(n,m)$.

\begin{example}[Knuth's $\SB(3,3)$ cycle, recast]\label{ex:n3}
The cycle of Ex.~223(d), $\texttt{000100201202210211011121222}$, has
\[
   S = \{(0,1),\;(1,0),\;(0,2),\;(1,2)\} \subseteq \Sigma_3^2,
\]
i.e.\ $|S|=4$. As a $3\times 3$ indicator, with row $=y_1$ and column $=y_2$:
\[
  \begin{pmatrix}\cdot & 1 & 1 \\ 1 & \cdot & 1 \\ \cdot & \cdot & \cdot\end{pmatrix}.
\]
Note that the diagonal $\{(0,0),(1,1),(2,2)\}$ is empty: $(a,a)\in S$ would put the
save self-loop at $(a,a,a)$ into the cycle, which is forbidden. There are exactly $12$
valid $S$'s for $n=3$, matching the count $12$ in Ex.~223(f).
\end{example}

% ============================================================================
\section{The factorisation \texorpdfstring{$f_S = A_b\circ\sigma$}{f S = A b o sigma}}
% ============================================================================

The key observation is that the successor map factorises through the cyclic shift.

\begin{definition}\label{def:Asigma}
Define
\begin{align*}
   \sigma\colon\Sigma^n&\to\Sigma^n,& \sigma(x_1,\dots,x_n) &= (x_2,\dots,x_n,x_1),\\
   A_b\colon\Sigma^n&\to\Sigma^n,& A_b(y_1,\dots,y_n) &= \bigl(y_1,\dots,y_{n-1},\;y_n+b(y_1,\dots,y_{n-1})\bmod m\bigr).
\end{align*}
\end{definition}

\begin{lemma}\label{lem:factor}
$f_S = A_b\circ\sigma$.
\end{lemma}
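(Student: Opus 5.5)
The plan is to verify the identity pointwise, since both sides are maps $\Sigma^n\to\Sigma^n$. I will fix an arbitrary $x=(x_1,\dots,x_n)\in\Sigma^n$, evaluate $A_b(\sigma(x))$ by applying the two definitions of Definition~\ref{def:Asigma} in order, and compare the result with the formula \eqref{eq:fS} for $f_S(x)$.

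First, by definition $\sigma(x)=(x_2,\dots,x_n,x_1)$. I will write this as $y=(y_1,\dots,y_n)$, so that
\[
  y_i=x_{i+1}\quad(1\le i\le n-1), \qquad y_n=x_1 .
\]
Next I apply $A_b$ to $y$. It leaves the first $n-1$ coordinates unchanged and replaces the last one by $y_n+b(y_1,\dots,y_{n-1})\bmod m$. After substituting, this gives
\[
  A_b(\sigma(x))=\bigl(x_2,\dots,x_n,\;x_1+b(x_2,\dots,x_n)\bmod m\bigr).
\]
This is exactly $f_S(x)$ as defined in \eqref{eq:fS}. Because $x$ was arbitrary, $f_S=A_b\circ\sigma$ as elements of $\Sym(\Sigma^n)$.

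There is no real obstacle here. The only point that needs care is the indexing: the argument of $b$ in $A_b$ is the first $n-1$ coordinates of $\sigma(x)$, which are $x_2,\dots,x_n$. This matches the suffix on which $b$ is evaluated in $f_S$, and that suffix is the one singled out by the choice-set reformulation of Ex.~223(e). One should also keep the composition order straight: $\sigma$ is applied first and $A_b$ second.
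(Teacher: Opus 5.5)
Your proof is correct and is the same as the paper's: evaluate $A_b(\sigma(x))=A_b(x_2,\dots,x_n,x_1)$ coordinatewise and read off $f_S(x)$ from \eqref{eq:fS}. The two points of care, the composition order and evaluating $b$ at $x_2,\dots,x_n$, are both handled correctly.
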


\begin{proof}
$A_b(\sigma(x_1,\dots,x_n))=A_b(x_2,\dots,x_n,x_1)=(x_2,\dots,x_n,x_1+b(x_2,\dots,x_n))=f_S(x)$.
\end{proof}

Geometrically: $\sigma$ rotates the string, then $A_b$ adjusts only the last coordinate by
either $0$ or $1$, depending on the prefix.

\begin{example}[Smallest non-trivial case, $m=3$, $n=2$]\label{ex:n2}
$\Sigma_3^2$ has $9$ states. We illustrate $\sigma$ and $A_b$ for one specific~$b$.
\begin{itemize}
\item $\sigma$ is the rotation $(x_1,x_2)\mapsto(x_2,x_1)$. Its cycle decomposition on
  $\Sigma_3^2$ is
  \[
     (00)\,(11)\,(22)\,(01\;10)(02\;20)(12\;21).
  \]
  Three fixed points and three transpositions; $\sgn(\sigma)=(-1)^3=-1$.
\item Take $b(0)=0,\,b(1)=1,\,b(2)=1$, i.e.\ $S=\{(0)\}\subseteq\Sigma_3^1$.
  Then $A_b$ acts on the prefix-fibres $\{(0,*)\},\{(1,*)\},\{(2,*)\}$ by
  identity, $3$-cycle $(10\;11\;12)$, and $3$-cycle $(20\;21\;22)$ respectively.
  $\sgn(A_b)=(+1)(+1)(+1)=+1$. And $\sgn(f_S)=\sgn(A_b)\sgn(\sigma)=-1$, so $f_S$
  cannot be a single $9$-cycle (which would have sign $+1$). Direct simulation confirms
  it isn't: $f_S = (00)\,(01\;11\;12\;22\;20\;02\;21\;10)$, an $8$-cycle plus a fixed
  point of total sign $(-1)^{7}\cdot(+1)=-1$. (Knuth's Ex.~223(c) is recovered for $m=3$.)
\end{itemize}
This is the entire proof, in miniature.
\end{example}

% ============================================================================
\section{Sign of $A_b$, sign of $\sigma$, and the obstruction}
% ============================================================================

\begin{lemma}[Sign of $A_b$ for odd $m$]\label{lem:signA}
For odd $m$ and any $b\colon\Sigma^{n-1}\to\{0,1\}$, $\sgn(A_b)=+1$.
\end{lemma}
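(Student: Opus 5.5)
The plan is to decompose $\Sigma^n$ into the prefix-fibres $F_y=\{(y,t): t\in\Sigma\}$ for $y\in\Sigma^{n-1}$, exactly as in Example~\ref{ex:n2}. By Definition~\ref{def:Asigma}, $A_b$ leaves the first $n-1$ coordinates unchanged, so it maps each fibre $F_y$ to itself. Consequently $A_b$ is the product of its restrictions $A_b|_{F_y}$. These act on pairwise disjoint sets and therefore commute, so
\[
   \sgn(A_b)=\prod_{y\in\Sigma^{n-1}}\sgn\bigl(A_b|_{F_y}\bigr).
\]
Here each restriction is regarded as a permutation of $\Sym(\Sigma^n)$ that fixes every point outside $F_y$. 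It therefore suffices to show that every factor equals $+1$.

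On the fibre $F_y$ the map is $t\mapsto t+b(y)\bmod m$, and there are two cases. If $b(y)=0$, the restriction is the identity, whose sign is $+1$. If $b(y)=1$, the restriction is the translation $t\mapsto t+1$ on $\mathbb{Z}/m$, which is a single $m$-cycle $(y0\;y1\;\dots\;y(m{-}1))$. An $m$-cycle has sign $(-1)^{m-1}$, and this is $+1$ because $m$ is odd.

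Multiplying over all $y$ gives $\sgn(A_b)=+1$ regardless of $b$. There is no real obstacle. The only point needing care is that the sign of a product of permutations with disjoint supports is the product of their signs, which follows because $\sgn$ is a homomorphism. It is also worth noting where oddness of $m$ is used: for even $m$, each $y$ with $b(y)=1$ would contribute a factor $-1$, and the sign would depend on $|\Sigma^{n-1}\setminus S|$.
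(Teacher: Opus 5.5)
Your proof is correct and follows the paper's argument essentially verbatim: $A_b$ preserves each prefix fibre, where it acts as either the identity or an $m$-cycle of sign $(-1)^{m-1}=+1$, and the sign of $A_b$ is the product of the signs of these disjoint restrictions. Your closing remark is also accurate: for even $m$ the sign would be $(-1)^{|\Sigma^{n-1}\setminus S|}$, since $b(y)=1$ exactly when $y\notin S$.
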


\begin{proof}
$A_b$ preserves every prefix $p\in\Sigma^{n-1}$, and acts on each fibre
$F_p=\{(p,c):c\in\Sigma_m\}$ by $c\mapsto c+b(p)\bmod m$.
\begin{itemize}
\item If $b(p)=0$, $A_b$ is the identity on $F_p$ (sign $+1$).
\item If $b(p)=1$, $A_b$ acts on $F_p$ as the $m$-cycle $0\to 1\to\dots\to m-1\to 0$,
  whose sign is $(-1)^{m-1}=+1$ for odd~$m$.
\end{itemize}
The fibres are pairwise disjoint, so $A_b$ is a product of disjoint even permutations.
\end{proof}

\begin{corollary}\label{cor:signfS}
For odd $m$ and any $S$, $\sgn(f_S)=\sgn(\sigma)$.
\end{corollary}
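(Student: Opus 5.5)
The plan is to combine Lemma~\ref{lem:factor} with Lemma~\ref{lem:signA} using only the multiplicativity of the sign homomorphism $\sgn\colon\Sym(\Sigma^n)\to\{\pm1\}$.

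First, by Lemma~\ref{lem:factor} we have $f_S=A_b\circ\sigma$ as elements of $\Sym(\Sigma^n)$. Here $b$ is the function attached to $S$ via $b(y)=[y\notin S]$. Both factors are genuine permutations of $\Sigma^n$: $\sigma$ is invertible with inverse the right rotation, and $A_b$ is invertible with inverse $(y_1,\dots,y_n)\mapsto(y_1,\dots,y_{n-1},y_n-b(y_1,\dots,y_{n-1})\bmod m)$. Since $\sgn$ is a group homomorphism, $\sgn(f_S)=\sgn(A_b)\,\sgn(\sigma)$.

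Second, $m$ is odd and $b$ takes values in $\{0,1\}$, so Lemma~\ref{lem:signA} applies and gives $\sgn(A_b)=+1$. Substituting, $\sgn(f_S)=\sgn(\sigma)$. Neither step uses any property of $S$ beyond its determining such a $b$, so the identity holds for every choice set $S\subseteq\Sigma^{n-1}$, and for all $n\ge 1$.

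There is essentially no obstacle. The only point needing care is that the composition in Lemma~\ref{lem:factor} is taken in $\Sym(\Sigma^n)$ on the same underlying set, so that multiplicativity of the sign applies directly. The substantive difficulty, namely computing $\sgn(\sigma)$ on $\Sigma_m^n$ via counting cycles of the rotation, is deferred to the subsequent Burnside computation and is not needed for this corollary.
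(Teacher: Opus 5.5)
Your proposal is correct and is exactly the paper's (implicit) argument. The corollary follows from the factorisation $f_S=A_b\circ\sigma$ of Lemma~\ref{lem:factor}, the multiplicativity of $\sgn$, and $\sgn(A_b)=+1$ for odd $m$ from Lemma~\ref{lem:signA}.
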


\begin{definition}[Necklace count]\label{def:necklace}
A \emph{necklace} of length $n$ over an alphabet of size $m$ is an equivalence
class of strings in $\Sigma^n$ under cyclic rotation, i.e.\ an orbit of the
cyclic group $\langle \sigma \rangle \le \Sym(\Sigma^n)$ acting on $\Sigma^n$.
We write
\[
   N(n, m) := \#\{\text{necklaces of length $n$ over $\Sigma_m$}\}
            = |\Sigma^n / \langle\sigma\rangle|.
\]
By P\'olya's enumeration theorem (or by a direct application of Burnside to
$\langle\sigma\rangle$),
\[
   N(n, m) = \frac{1}{n}\sum_{k=0}^{n-1} m^{\gcd(k,n)}
           = \frac{1}{n}\sum_{d \mid n} \varphi(d)\, m^{n/d}.
\]
For $m=3$ this is OEIS sequence \texttt{A001867}~\cite{OEISA001867}.
\end{definition}

The orbits of $\sigma$ on $\Sigma^n$ are exactly these necklaces; each orbit of
size $d \mid n$ contributes a $d$-cycle to $\sigma$, of sign $(-1)^{d-1}$. Hence
\begin{equation}\label{eq:sigmasign}
   \sgn(\sigma) = (-1)^{\sum_O(|O|-1)} = (-1)^{m^n-N(n,m)}.
\end{equation}
For odd $m$, $m^n$ is odd, hence $(-1)^{m^n-N(n,m)} = (-1)^{1-N(n,m)}$ and
\begin{equation}\label{eq:signsigma-odd}
   \sgn(\sigma) \;=\; -(-1)^{N(n,m)};
\end{equation}
in particular, $\sgn(\sigma)=-1$ iff $N(n,m)$ is even.

\begin{lemma}[Necklace parity for $m\equiv 3\pmod 4$, $n$ even]\label{lem:Npar}
If $m$ is odd, $m\equiv 3\pmod 4$, and $n$ is even, then $N(n,m)$ is even.
\end{lemma}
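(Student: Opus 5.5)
The plan is to compute the parity of $N(n,m)$ through the reversal involution on necklaces, as the abstract's ``dihedral Burnside computation'' suggests. Let $\rho$ be string reversal, $\rho(x_1\dots x_n)=x_n\dots x_1$. Since $\rho\sigma\rho^{-1}=\sigma^{-1}$, $\rho$ maps $\langle\sigma\rangle$-orbits to $\langle\sigma\rangle$-orbits, so it induces an involution on the set of necklaces. Non-fixed necklaces come in pairs, so
\[
N(n,m)\equiv R(n,m)\pmod 2,
\]
where $R(n,m)$ is the number of \emph{achiral} necklaces, i.e.\ those with $\rho(O)=O$. The rest of the proof computes $R(n,m)$ exactly for even $n$.

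To compute $R$, I apply Burnside's lemma to the dihedral group $D_n=\langle\sigma,\rho\rangle$ of order $2n$ acting on $\Sigma^n$. Let $B(n,m)$ be the number of $D_n$-orbits (bracelets). Each bracelet is either one achiral necklace or the union of two mirror-image necklaces. Hence $B=R+(N-R)/2$, that is, $R=2B-N$.

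Burnside over $D_n$ splits into the rotation part, which contributes $nN$ by Definition~\ref{def:necklace}, and the $n$ reflections. For even $n$ these fall into two types:
\begin{itemize}
\item $n/2$ reflections through two opposite positions, each fixing $m^{n/2+1}$ strings;
\item $n/2$ reflections through two opposite edges, each fixing $m^{n/2}$ strings.
\end{itemize}
Therefore
\[
2nB=nN+\tfrac{n}{2}\bigl(m^{n/2+1}+m^{n/2}\bigr),\qquad\text{so}\qquad
R=2B-N=\frac{(m+1)\,m^{n/2}}{2}.
\]

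The parity now follows directly. For $m\equiv 3\pmod 4$, $(m+1)/2$ is even, and $m^{n/2}$ is an integer. Hence $R$ is even, and so $N(n,m)$ is even.

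The main obstacle is the bookkeeping in the middle step. Two things need care there. First, one must verify that the reflections really split into those two conjugacy types when $n$ is even, with the stated fixed-point counts obtained by counting free positions. Second, one must justify $R=2B-N$ cleanly, rather than just quoting the bracelet formula.

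A more self-contained alternative for $R$ is to count achiral necklaces directly, via the fixed points of $\rho$ acting on necklaces. A necklace is achiral iff some rotation of one of its representatives is fixed by a reflection, and this leads to the same count. I will go with Burnside on $D_n$, since it is short and matches the method announced in the abstract.
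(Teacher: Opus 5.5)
Your proof is correct and is essentially the paper's argument. You use the same Burnside count over $D_n$, with $n/2$ reflections fixing $m^{n/2+1}$ strings and $n/2$ fixing $m^{n/2}$, giving $2B = N + \frac{m+1}{2}m^{n/2}$. The paper reads the parity off directly from the integrality of $B - \frac{m+1}{4}m^{n/2} = N/2$, whereas you route it through the achiral count $R = 2B - N$ (the bracelet identity the paper also records in its $n=4$ example), which amounts to the same thing.
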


\begin{proof}
The argument is a standard application of the Cauchy--Frobenius (Burnside)
lemma to the dihedral group on $\Sigma^n$.  Apply Burnside to
$D_n=\langle\sigma,R\rangle$ acting on $\Sigma^n$, where
$R(x_1,\dots,x_n)=(x_n,\dots,x_1)$ is reversal. Then $|D_n|=2n$ and
\[
   |D_n\text{-orbits}|
   = \frac{1}{2n}\!\left[\sum_{k=0}^{n-1}|\Fix(\sigma^k)|+\sum_{k=0}^{n-1}|\Fix(R\sigma^k)|\right].
\]
The rotation contribution is standard:
$\sum_k|\Fix(\sigma^k)|=\sum_k m^{\gcd(k,n)}=nN(n,m)$.
For the reflections: writing $R\sigma^k(x)_i = x_{(n-1+k-i)\bmod n}$, the
condition $R\sigma^k(x)=x$ says $x$ is constant on each orbit of the index
map $\psi_k\colon i \mapsto (n-1+k-i) \bmod n$ on $\{0,\dots,n-1\}$.
Since $\psi_k$ is an involution, each orbit has size $1$ (a fixed index) or
$2$ (a transposition pair); each orbit contributes one free coordinate value,
so $|\Fix(R\sigma^k)| = m^{(\text{number of }\psi_k\text{-orbits})}$.
The fixed indices solve $2i \equiv k-1 \pmod n$; for $n$ even this congruence
has $2$ solutions if $k$ is odd, and $0$ solutions if $k$ is even. Hence
\[
   |\Fix(R\sigma^k)| =
   \begin{cases} m^{2 + (n-2)/2} = m^{n/2+1}, & k \text{ odd},\\
                 m^{n/2}, & k \text{ even}.\end{cases}
\]
Summing over the $n/2$ odd and $n/2$ even values of $k$,
$\sum_k |\Fix(R\sigma^k)| = \tfrac{n}{2}(m^{n/2+1}+m^{n/2}) = \tfrac{n}{2}(m+1)\,m^{n/2}$,
and therefore
\begin{equation}\label{eq:Dorbits}
   |D_n\text{-orbits}|=\frac{N(n,m)}{2}+\frac{m+1}{4}\,m^{n/2}.
\end{equation}
The left-hand side of~\eqref{eq:Dorbits} is an integer (as a count of orbits).
For $m\equiv 3\pmod 4$, the factor $(m+1)/4$ is an integer, and $m^{n/2}$ is an
integer, so the second term $\frac{m+1}{4}m^{n/2}$ is an integer.
\emph{The difference of two integers is an integer}, so
$N(n,m)/2 = |D_n\text{-orbits}| - \frac{m+1}{4}m^{n/2}\in\mathbb{Z}$,
i.e.\ $N(n,m)$ is even.
\end{proof}

\begin{example}[Worked Burnside count, $m=3$, $n=4$]\label{ex:n4burnside}
We illustrate Lemma~\ref{lem:Npar} for the smallest non-degenerate case, $m=3$, $n=4$.
Here $|\Sigma^4|=81$ and $|D_4|=8$. The eight group elements are the four rotations
$\sigma^k$ and the four reflections $R\sigma^k$, $0 \le k \le 3$.
A direct computation of $|\Fix(g)|$ for each $g$:
\[
\begin{array}{c|c|c}
 g & \text{fixed by $g$} & |\Fix(g)| \\\hline
 \mathrm{id} & \text{any } x & 81 \\
 \sigma & x_i = x_{(i+1)\bmod 4} \;\Rightarrow\; \text{constants} & 3 \\
 \sigma^2 & x_i = x_{(i+2)\bmod 4} \;\Rightarrow\; \text{period dividing } 2 & 9 \\
 \sigma^3 & x_i = x_{(i+3)\bmod 4},\;\gcd(3,4)=1\;\Rightarrow\;\text{constants} & 3 \\\hline
 R & x_0=x_3,\ x_1=x_2 & 9 \\
 R\sigma & x_1=x_3,\ x_0,x_2\text{ free} & 27 \\
 R\sigma^2 & x_0=x_1,\ x_2=x_3 & 9 \\
 R\sigma^3 & x_0=x_2,\ x_1,x_3\text{ free} & 27
\end{array}
\]
The rotation contributions sum to $81+3+9+3=96 = 4\cdot N(4,3)$, giving $N(4,3)=24$.
The reflection contributions sum to
$9+27+9+27 \,=\, \tfrac{n}{2}(m+1)\,m^{n/2} \,=\, 2\cdot 4\cdot 9 \,=\, 72$,
matching the formula in the proof of Lemma~\ref{lem:Npar}.

Burnside on $D_4$ then yields
$|D_4\text{-orbits}| = (96+72)/8 = 21$ bracelets.
The bracelet identity
\[
   2\,|D_n\text{-orbits}| \;=\; N(n,m)+\#\text{Rfix}
\]
follows from the standard index-2 orbit-counting formula
$|G\backslash X|=\bigl(|H\backslash X|+|\Fix(t\restriction H\backslash X)|\bigr)/2$
applied to $H=\langle\sigma\rangle\trianglelefteq D_n=G$ with $t=R$:
of the $N(n,m)$ $H$-orbits, those fixed setwise by~$R$ are counted in
$\#\text{Rfix}$, and each pair of $R$-swapped $H$-orbits coalesces into one
$G$-orbit. Substituting $n=4,\,m=3$ gives
$2\cdot 21 = 24 + \#\text{Rfix}$, hence $\#\text{Rfix} = 18$, agreeing with the
closed form $(m+1)/2 \cdot m^{n/2} = 2\cdot 9 = 18$.
Here $\#\text{Rfix}$ counts $\langle\sigma\rangle$-orbits fixed setwise by
reversal $R$; these include orbits of palindromic strings, and also orbits
where $R$ acts as a non-trivial rotation (e.g.\ $\{0102,1020,0201,2010\}$,
none of whose elements is a palindrome).

Since $m=3 \equiv 3 \pmod 4$ we have $(m+1)/2 = 2$ even, hence $\#\text{Rfix}=18$
is even, hence $N(4,3) \equiv \#\text{Rfix} \equiv 0 \pmod 2$ and indeed
$N(4,3)=24$ is even.

For comparison, the cyclic-only Burnside formula
$N(n,m) = \tfrac{1}{n}\sum_k m^{\gcd(k,n)}$ recovers $N(4,3)=24$
but offers no parity information --- one would have to study
$\sum_k 3^{\gcd(k,4)} \pmod 8$ by hand.  Introducing $R$ gives a \emph{second}
divisibility (the bracelet count is integer), which together with the cyclic
formula pins down the parity.
\end{example}

% ============================================================================
\section{Main theorem}
% ============================================================================

\begin{theorem}\label{thm:main}
Let $m$ be odd with $m\equiv 3\pmod 4$, and let $n$ be even. Then $\SB(m,n)$ has no Hamiltonian cycle.
\end{theorem}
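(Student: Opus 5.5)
The plan is to argue by contradiction, combining Proposition~\ref{prop:reform}, Corollary~\ref{cor:signfS}, equation~\eqref{eq:signsigma-odd} and Lemma~\ref{lem:Npar}. Suppose $\SB(m,n)$ has a Hamiltonian cycle. By Ex.~223(e) there is a choice set $S\subseteq\Sigma^{n-1}$, hence a function $b\colon\Sigma^{n-1}\to\{0,1\}$. By Proposition~\ref{prop:reform}, the successor map $f_S$ is then a single cycle of length $m^n$ in $\Sym(\Sigma^n)$. A single $L$-cycle has sign $(-1)^{L-1}$. Since $m$ is odd, $m^n$ is odd, so $\sgn(f_S)=(-1)^{m^n-1}=+1$.

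Next I compute the sign of $f_S$ independently of $S$. Lemma~\ref{lem:factor} gives $f_S=A_b\circ\sigma$. Corollary~\ref{cor:signfS}, which rests on Lemma~\ref{lem:signA} and uses that $m$ is odd, then gives $\sgn(f_S)=\sgn(\sigma)$. By~\eqref{eq:signsigma-odd} we have $\sgn(\sigma)=-(-1)^{N(n,m)}$. Lemma~\ref{lem:Npar} applies because $m\equiv 3\pmod 4$ and $n$ is even, so $N(n,m)$ is even. Hence $\sgn(\sigma)=-1$, and therefore $\sgn(f_S)=-1$.

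This contradicts $\sgn(f_S)=+1$, so no Hamiltonian cycle exists. Taking $m=3$ gives Ex.~224.

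There is no real obstacle here, since all the work is in the earlier lemmas. The only points to check are the following:
\begin{itemize}[leftmargin=*]
\item The correspondence between Hamiltonian cycles and single-cycle $f_S$ (Proposition~\ref{prop:reform}) must hold as stated, including the unique choice set from Ex.~223(e).
\item The parity bookkeeping in~\eqref{eq:sigmasign} must be right: each orbit of $\sigma$ of size $d$ contributes $d-1$ transpositions, and the orbits partition all $m^n$ strings, giving exponent $m^n-N(n,m)$.
\item The degenerate case $n=0$ should be excluded; the theorem implicitly takes $n\ge 2$.
\end{itemize}
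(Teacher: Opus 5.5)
Your proposal is correct and follows the paper's proof step for step. A Hamiltonian cycle forces $f_S$ to be a single $m^n$-cycle, hence of sign $+1$, while $\sgn(f_S)=\sgn(\sigma)$ for odd $m$, the orbit-count formula for $\sgn(\sigma)$ and the necklace-parity lemma together force $\sgn(f_S)=-1$. Your caveat about $n=0$ is moot, since $\SB(m,n)$ is only defined for $n\ge 1$, so even $n$ already means $n\ge 2$.
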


\begin{proof}
Suppose, towards a contradiction, that $\SB(m,n)$ has a Hamiltonian cycle. By
Proposition~\ref{prop:reform} it is the orbit of some $f_S$ on $\Sigma^n$, where $f_S$ is
a single $m^n$-cycle in $\Sym(\Sigma^n)$. The sign of an $L$-cycle in $\Sym$
is $(-1)^{L-1}$; since $m^n$ is odd,
\[
   \sgn(f_S) = (-1)^{m^n-1} = +1.
\]
On the other hand, by Corollary~\ref{cor:signfS}, $\sgn(f_S)=\sgn(\sigma)$, and by
\eqref{eq:sigmasign} together with Lemma~\ref{lem:Npar}, $\sgn(\sigma)=-(-1)^{N(n,m)}=-1$.
Therefore $\sgn(f_S)=-1$, a contradiction.
\end{proof}

\begin{corollary}[Ex.~224]\label{cor:ex224}
$\SB(3,n)$ has no Hamiltonian cycles when $n$ is even.
\end{corollary}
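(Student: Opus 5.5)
The plan is to specialise Theorem~\ref{thm:main} to $m=3$. The only hypotheses to check are that $m=3$ is odd and that $3\equiv 3\pmod 4$. Both are immediate, so for every even $n$ the theorem applies directly and gives that $\SB(3,n)$ has no Hamiltonian cycle.

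For completeness I would also trace the chain of earlier results in this special case, to confirm that nothing degenerates at small $n$. Suppose a Hamiltonian cycle existed. By Proposition~\ref{prop:reform}, some $f_S$ would be a single $3^n$-cycle. Since $3^n$ is odd, such a cycle has sign $(-1)^{3^n-1}=+1$.

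On the other side, the sign of $f_S$ is forced to be $-1$:
\begin{itemize}
\item Lemma~\ref{lem:factor} gives the factorisation $f_S=A_b\circ\sigma$.
\item Lemma~\ref{lem:signA} applies because $3$-cycles are even, so $\sgn(A_b)=+1$.
\item Hence Corollary~\ref{cor:signfS} gives $\sgn(f_S)=\sgn(\sigma)$.
\item Equation~\eqref{eq:signsigma-odd} gives $\sgn(\sigma)=-(-1)^{N(n,3)}$.
\item Lemma~\ref{lem:Npar} with $(m+1)/4=1$ gives that $N(n,3)$ is even. Concretely, \eqref{eq:Dorbits} becomes $|D_n\text{-orbits}|=N(n,3)/2+3^{n/2}$, which forces $N(n,3)/2\in\mathbb{Z}$.
\end{itemize}
Thus $\sgn(f_S)=-1$, contradicting the value $+1$ above.

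There is no real obstacle here. The one point worth checking is the edge case $n=2$, where the reflection fixed-point count in Lemma~\ref{lem:Npar} still works: for odd $k$ there are two fixed indices and no pairs. This case is consistent with Example~\ref{ex:n2} and Ex.~223(c). The case $n=4$ matches Example~\ref{ex:n4burnside}, where $N(4,3)=24$.
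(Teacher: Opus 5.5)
Your proposal is correct and matches the paper: the corollary is simply Theorem~\ref{thm:main} specialised to $m=3$, which is odd and $\equiv 3\pmod 4$. Your unpacking of the chain of lemmas is also accurate, including the specialisation $(m+1)/4=1$ in \eqref{eq:Dorbits} and the $n=2$ edge case (two fixed indices for odd $k$, consistent with $N(2,3)=6$).
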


\begin{remark}[Why Algorithm~B did not see the obstruction]
Knuth's Algorithm~B is a depth-first branch-and-prune search over partial
choice sets; it tracks local consistency but cannot read off the global sign
of~$f_S$, a property of the whole permutation that depends on every decision.
This explains the 3~million ``feasible near-solutions'' Knuth reports for
$n=4$: all of them have $\sgn(f_S)=-1$ and hence cannot close into an
$81$-cycle, but Algorithm~B has no way to prune them in advance.
\end{remark}

\begin{remark}[Independent computational verification]\label{rem:rust}
For the base case $n=4$, the only one previously settled (by Knuth's
Algorithm~B with a $\sim 3$-million-node search tree), an independent
Rust implementation by the author enumerates all $2^{24}$ choice sets
$S\subseteq \Sigma_3^{3}$ that respect the no-self-loop pruning
$b((a,a,a))=1$ (of the $|\Sigma_3^3|=27$ suffixes, $3$ are pinned by the
pruning, leaving $2^{24}$ free configurations), simulates $f_S$ to the
full period, and finds zero $81$-cycles, in $4.9$ seconds on a single core. The same code verifies
$\sgn(f_S)=\sgn(\sigma)$ as predicted by Corollary~\ref{cor:signfS} for
$10^5$\,random $b$ at $n=4$ and for $5\cdot 10^4$\,random $b$ at $n=6$, and
matches $N(n,3)$ against OEIS \texttt{A001867} for $n\le 16$. The
proof above is independent of all of these computations; in particular,
Theorem~\ref{thm:main} makes any $n=6$ enumeration ($\sim 2^{240}$ choice sets
after pruning, computationally hopeless) unnecessary.
\end{remark}

\begin{remark}[Scope]\label{rem:scope}
Theorem~\ref{thm:main} is sharp in the sense that the obstruction does not extend
to the other residue classes: for $m\equiv 1\pmod 4$, equation~\eqref{eq:Dorbits}
forces $N(n,m)$ to be \emph{odd} when $n$ is even, hence $\sgn(\sigma)=+1$
matches the sign of an $m^n$-cycle and no contradiction arises; for even $m$,
$A_b$ is a product of $m$-cycles whose sign $(-1)^{m-1}=-1$ depends on
$|S|=|\{p:b(p)=1\}|$, so $\sgn(A_b)$ is not constant in $b$ and the same
argument breaks down. In particular, Knuth's hint to Ex.~225 --- that
$\SB(m,n)$ should admit a Hamiltonian cycle for all $m>3$ and $n>2$ --- as
stated is incompatible with Theorem~\ref{thm:main} in the residue class
$m\equiv 3\pmod 4$ with $n$ even, i.e.\ for
$m\in\{7,11,15,\dots\}$ and even~$n\ge 4$; presumably the hint was intended
for the complementary classes. The hint is consistent with the present
theorem for those remaining residue classes; for $m\equiv 1\pmod 4$ and even $m$, the
constructive $k$-ary shift-rule literature
(\cite{FredricksenMaiorana,Alhakim,SawadaWilliamsWong2017,GabricSawadaWilliamsWong})
exhibits Hamiltonian cycles in the larger $m$-ary de~Bruijn graph, but those
constructions use feedback functions taking all of $\Sigma_m$ rather than
$\{0,1\}$, so they do not directly produce $\SB(m,n)$ cycles; finding (or
disproving) such cycles in those residue classes remains open.
\end{remark}

\medskip
\noindent\textbf{Acknowledgements.}
The author thanks Donald E.~Knuth for posing the exercise in Pre-Fascicle~8a
and for prompt correspondence about this paper.

% ============================================================================
\renewcommand{\refname}

\end{document}